\renewcommand*{\backref}[1]{}
\renewcommand*{\backrefalt}[4]{%
    \ifcase #1 (Not cited.)%
    \or        (Cited on page~#2.)%
    \else      (Cited on pages~#2.)%
    \fi}
\numberwithin{equation}{section}
\newtheorem{lem}{Lemma}
\newtheorem{thm}{Theorem}
\newtheorem{prop}{Proposition}
\theoremstyle{definition}
\newtheorem{defn}{Definition}
\newtheorem{exmp}{Example}
\newtheorem*{thankyou}{Acknowledgements}
\theoremstyle{remark}
\newtheorem*{rem}{Remark}
\newtheorem*{claim}{Claim}
\newtheorem*{pfclaim}{Proof of the Claim}
\renewcommand{\P}{\mathbb{P}}
\newcommand{\Q}{\mathbb{Q}}
\newcommand{\Z}{\mathbb{Z}}
\renewcommand{\O}{\mathcal{O}}
\newcommand{\F}{\mathcal{F}}
\newcommand\td{{\rm{td}}}
\newcommand\ch{{\rm{ch}}}
\let\@wraptoccontribs\wraptoccontribs
\subjclass[2020]{Primary: 14J42}
\keywords{Complex Algebraic Geometry, Holomorphic symplectic varieties, hyper-Kähler varieties}
\begin{document}

\title{Riemann-Roch Polynomials of the known Hyperkähler Manifolds}
\author{Ángel David Ríos Ortiz}
\address{Sapienza Universita di Roma, Dipartimento di Matematica, Piazzale Aldo Moro 5, 00185 Roma}
\address{Universit\'e Paris-Saclay, CNRS, Laboratoire de Math\'ematiques d'Orsay, B\^at. 307, 91405 Orsay, France}
\email{angel-david.rios-ortiz@universite-paris-saclay.fr}
\date{}
\contrib[with an appendix by]{Yalong Cao and Chen Jiang}
\address{Kavli Institute for the Physics and Mathematics of the Universe (WPI),The University of Tokyo Institutes for Advanced Study, The University of Tokyo, Kashiwa, Chiba 277-8583, Japan}
\email{yalong.cao@ipmu.jp}
\address{Shanghai Center for Mathematical Sciences, Fudan University, 
Shanghai 200438, China}
\email{chenjiang@fudan.edu.cn}

\maketitle

\begin{abstract}
    We compute explicit formulas for the Euler characteristic of line bundles in the two exceptional examples of Hyperkähler Manifolds introduced by O'Grady. In an Appendix, Chen Jiang and Yalong Cao use our formulas to compute the Chern numbers of the example of O'Grady in dimension 10.
\end{abstract}
\selectlanguage{french}
\begin{abstract}
    (Polynômes de Riemann-Roch pour les variétés hyperkählériennes connues) Nous calculons des formules explicites pour la caractéristique d'Euler de fibrés en droites pour les deux exemples exceptionnels de variétés hyperkählériennes introduits par O'Grady. Dans un appendice, Chen Jiang et Yalong Cao utilisent nos formules pour calculer le nombre de Chern de l'exemple d'O'Grady en dimension 10.
\end{abstract}

\section{Introduction}

A compact Kähler manifold is called \emph{Hyperkähler} (HK) if it is simply connected and carries a holomorphic symplectic form that spans $H^{2,0}$. HK manifolds can be thought of as the higher dimensional analogues of $K3$ surfaces, and they constitute one of the three fundamental classes of varieties with vanishing first Chern class \cite{Beauville83}.

Although any two $K3$ surfaces are deformation equivalent, this fact no longer holds in higher dimensions. The first two series of examples of deformation types in each (necesarily even) dimension were described by Beauville \cite{Beauville83}; the first series, denoted by $K3^{[n]}$, is given by the Hilbert scheme of $n$ points in a K3 surface. The other one is a submanifold in the Hilbert scheme of $n$ points in an abelian surface. Generalizing the construction of a Kummer surface, this ($2n$-dimensional) deformation type is denoted by $\text{Kum}_n$.

Later, O'Grady introduced two new deformation types in dimensions $6$ and $10$  (\cite{OG99},\cite{OG03}), now denoted by OG6 and OG10 respectively. The construction of both exceptional examples is done by resolving a singular moduli space of sheaves on a K3 surface for OG10 and an abelian surface for OG6. In view of this analogies it is expected that the projective geometry of HK manifolds of $K3^{[5]}$-type (respectively $\text{Kum}_3$-type) should be related with that of OG10-type (respectively OG6-type).

The main result of this paper (cf. Theorem \ref{thm:RR}) gives, for the HK manifolds described by O'Grady, closed formulas that compute the Euler characteristic of any line bundle in terms of numerical polynomials that only depend in the Beauville-Bogomolov form --a canonical quadratic form in the second cohomology group of any HK. Surprisingly the formulas turn out to be exactly the same as those of the series described by Beauville.

In order to compute these polynomials we use two different methods. The first one exploits a recent description in \cite{LSV17} of OG10 as a compactification of a fibration associated with a cubic $4$-fold. The second one is based on the explicit descriptions of some uniruled divisors in two different models of OG6 given in \cite{MRS2018} and \cite{Nagai2014}.

Observe that in \cite{CaoChen19} the authors give a closed formula for the Riemann-Roch polynomial of OG6 in terms of the so-called $\lambda$-invariant, in our work the closed formula is obtained directly.

Finally we would like to point out the very recent paper \cite{Jiang20} where Chen Jiang proves the positivity of the coefficients of the Riemann-Roch polynomial for HK manifolds in general.

\begin{thankyou}
I am grateful to my PhD advisor Kieran O'Grady for his patience and deep insight. Thanks also goes to Federico Caucci, Antonio Rapagnetta and Domenico Fiorenza for many stimulating conversations and their mathematical suggestions. Finally I would like to thank heartily Chen Jiang and Yalong Cao for the interest they took in this work and for writing the Appendix. The author was supported by the European Research Council (ERC) under the European Union’s Horizon 2020 research and innovation programme (ERC-2020-SyG-854361-HyperK)
\end{thankyou}

\section{Preliminaries}

Let $X$ be a HK manifold of dimension $2n$ and $q_X$ its Beauville-Bogomolov form \cite{Beauville83}. Recall that the Fujiki constant $c_X$ is defined as the rational  number such that for all $\alpha\in H^2(X)$ we have the so-called Fujiki relation:
\begin{equation}\label{eq:fujiki}
    \int_X \alpha^{2n} = c_X q_X(\alpha)^n
\end{equation}
\begin{rem}
The polarized form of Fujiki's relation is
\begin{equation}
    \int_X \alpha_1\smile\dots\smile\alpha_{2n} = \frac{c_X}{(2n)!}\sum_{\sigma\in S_{2n}} q_X(\alpha_{\sigma(1)},\alpha_{\sigma(2)})\cdot \cdot \cdot q_X(\alpha_{\sigma(2n-1)},\alpha_{\sigma(2n)})
\end{equation}
\end{rem}

Huybrechts further generalized this relation to all polynomials in the Chern classes, more specifically he proved the following

\begin{thm}[\cite{GHJ2003}, Corollary 23.17]\label{thm:constants}
Assume $\alpha\in H^{4j}(X,\Q)$ is of type $(2k,2k)$ for all small deformations of $X$. Then there exists a constant $C(\alpha)\in\Q$ such that
\begin{equation}\label{eq:todd}
    \int_X \alpha \smile \beta^{2n-2k} = C(\alpha)\cdot q_X(\beta)^{n-k}
\end{equation}
for all $\beta\in H^2(X,\Q)$.
\end{thm}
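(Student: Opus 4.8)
The plan is to view $P(\beta):=\int_X\alpha\smile\beta^{2n-2k}$ as a homogeneous polynomial of degree $2(n-k)$ in $\beta\in H^2(X,\C)$ and to prove that it is a scalar multiple of the polynomial $q_X(\beta)^{n-k}$; the scalar is then the desired $C(\alpha)$. Since the case $k=n$ is trivial ($P$ is the constant $\int_X\alpha$), put $m:=n-k\geq 1$.

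The first step is to show that $P$ vanishes on the quadric $Q=\{q_X=0\}\subset\P(H^2(X,\C))$. By unobstructedness of deformations together with the local Torelli theorem, a Zariski-dense set of points $[\sigma]\in Q$ is realized by a small deformation $X_\sigma$ with $H^{2,0}(X_\sigma)=\C\sigma$; by hypothesis $\alpha$ stays of type $(2k,2k)$ on $X_\sigma$, so $\alpha\smile\sigma^{2m}\in H^{4n-2k,\,2k}(X_\sigma)=0$ (as $k<n$), i.e. $P(\sigma)=0$. Since $q_X$ is non-degenerate and $b_2(X)\geq 3$, the quadric $Q$ is reduced and irreducible, so $q_X\mid P$ in $\C[H^2(X,\C)]$.

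The core step is to upgrade this to $q_X^{m}\mid P$; by a degree count that already forces $P=C(\alpha)\,q_X^{m}$, and $C(\alpha)\in\Q$ because $P$ and $q_X$ are defined over $\Q$ and take rational values on $H^2(X,\Q)$. To run the upgrade I would fix a period point $[\sigma]$, write $H^2(X,\C)=\C\sigma\oplus\C\bar\sigma\oplus H^{1,1}(X_\sigma)$ and $\beta=\lambda\sigma+\mu\bar\sigma+\gamma$; a Hodge-type count kills every monomial involving $\sigma^a\bar\sigma^b$ with $a\neq b$, giving
\[ P(\beta)=\sum_{a=0}^{m}\binom{2m}{a,a,2m-2a}(\lambda\mu)^a\int_X\alpha\smile(\sigma\smile\bar\sigma)^a\smile\gamma^{\,2m-2a},\qquad q_X(\beta)=2q_X(\sigma,\bar\sigma)\,\lambda\mu+q_X(\gamma). \]
Running the deformation argument of the first step for the periods $[\sigma+\gamma]$ with $\gamma\in H^{1,1}(X_\sigma)$ and $q_X(\gamma)=0$ shows that $q_X|_{H^{1,1}(X_\sigma)}$ divides $P|_{H^{1,1}(X_\sigma)}$; iterating the construction inside $H^{1,1}(X_\sigma)$ (whose induced form is again non-degenerate) and inducting on $m$ one promotes this to $P|_{H^{1,1}(X_\sigma)}=C_\sigma\,q_X(\cdot)^m$, and then comparing the coefficients of each power $(\lambda\mu)^a$ in the identity above forces $P=C(\alpha)\,q_X^m$ with $C(\alpha)=C_\sigma$ independent of $[\sigma]$.

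The part I expect to be the genuine obstacle is exactly this last upgrade: vanishing on $Q$ alone only yields a single factor of $q_X$, and producing the full power $q_X^m$ forces one to exploit the Hodge-type hypothesis over the whole period domain — equivalently, along the twistor lines of a hyperkähler metric — together with the bookkeeping that passes from the restrictions $P|_{H^{1,1}(X_\sigma)}$ back to $P$; this is the substance of the chain of lemmas leading to Corollary~23.17 in \cite{GHJ2003}. In the applications made later in this paper, where $\alpha$ is a polynomial in the Chern classes of $X$ and hence invariant under parallel transport, there is a shortcut: $P$ is then invariant under a finite-index, hence Zariski-dense, subgroup of $O(q_X,\Z)$, so under all of $O(q_X,\C)$, and the only $O(q_X,\C)$-invariant polynomials are the polynomials in $q_X$, whence $P=C(\alpha)\,q_X^m$ by homogeneity.
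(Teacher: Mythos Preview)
The paper does not prove this theorem: it is quoted verbatim from \cite{GHJ2003}, Corollary~23.17, and used as a black box in the derivation of equation~\eqref{eq:chi}. There is therefore no proof in the paper to compare your proposal against.

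That said, your outline is essentially the strategy carried out in \cite{GHJ2003}: regard $P(\beta)=\int_X\alpha\smile\beta^{2m}$ as a degree-$2m$ polynomial on $H^2(X,\C)$, use local Torelli and unobstructedness to see that $P$ vanishes on the period quadric $\{q_X=0\}$, and then promote $q_X\mid P$ to $q_X^{m}\mid P$ by a Hodge-theoretic induction over the period domain. You correctly locate the nontrivial content in this last upgrade, and your sketch (decomposing $\beta$ with respect to a period $\sigma$, killing the off-diagonal monomials by type, and inducting on the $(1,1)$-part) is the right shape, though as written it is not yet a proof --- the bookkeeping that passes from $P|_{H^{1,1}(X_\sigma)}=C_\sigma\,q_X^m$ back to the full $P$ needs the intermediate lemmas in \cite{GHJ2003} rather than a one-line comparison of coefficients. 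Your closing remark is also on point: since the paper only ever applies the theorem to $\alpha=\Td_{2n-2i}(X)$, the monodromy/$O(q_X)$-invariance shortcut you describe already suffices for everything the paper needs.
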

\begin{rem}
If we set $\alpha = 1$ in Theorem \ref{thm:constants} we obtain the Fujiki relation \eqref{eq:fujiki} and also that $a_n = c_X$.
\end{rem}

The odd Chern classes (hence the odd Todd classes) of $X$ vanish since the symplectic form on $X$ induces an isomorphism between $T_X$ and its dual. The Todd classes are topological invariants of $X$, so for any line bundle $L$ in $X$ we combine Theorem \ref{thm:constants} with Hirzebruch-Riemann-Roch Theorem to get 
\begin{equation}\label{eq:chi}
   \chi(X,L) =  \sum_{i=0}^n\frac{1}{(2i)!}\int_X Td_{2n-2i}(X)\smile c_1(L)^{2i} = \sum_{i=0}^n \frac{a_i}{(2i)!} \cdot q_X(L)^{i}
\end{equation}
where $a_i := C(Td_{2n-2i}(X))$.
\begin{defn}[\emph{Huybrechts, Nieper-Wißkirchen, Riess}]
The Riemann-Roch polynomial of $X$, denoted by $RR_X(t)$, is the polynomial 
\[
RR_X(t) = \sum_{i=0}^n \frac{a_i}{(2i)!}t^i.
\]
\end{defn}

Let us list a few well-known properties of this polynomial.

\begin{lem}\label{lem:RRproperties}
Let $X$ be a HK variety of dimension $2n$. The following properties hold:
\begin{enumerate}
    \item $RR_X$ depends only on the deformation class of $X$.
    \item The constant term is $a_0 = n+1$.
    \item The coefficient of the highest-order term  is $a_n = c_X$ and is positive.
    \item The coefficient $a_{n-1}$ is positive.
\end{enumerate}
\end{lem}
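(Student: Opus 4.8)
The plan is to obtain all three statements by unwinding the definition $a_i=C(Td_{2n-2i}(X))$ and combining it with Theorem~\ref{thm:constants}, the Fujiki relation~\eqref{eq:fujiki}, Hirzebruch--Riemann--Roch, and the classical description of the Hodge numbers of a HK manifold; no genuinely new computation will be needed.

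For (1) I would show that each $a_i$ is locally constant in smooth families. Let $f\colon\mathcal{X}\to S$ be a smooth proper family of HK manifolds over a connected base. The Chern classes of the relative tangent bundle $T_{\mathcal{X}/S}\in H^*(\mathcal{X})$ restrict on each fibre to the Chern classes of $T_{X_s}$, and these restrictions are flat sections of the Gauss--Manin local system; the same holds for the Todd classes $Td_{2n-2i}(X_s)$. Since the Beauville--Bogomolov form is monodromy invariant, for a flat section $\beta$ both $\int_{X_s}Td_{2n-2i}(X_s)\smile\beta_s^{2i}$ and $q_{X_s}(\beta_s)$ are locally constant in $s$, so by the defining relation~\eqref{eq:todd} the same is true of $a_i$. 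As any two members of a deformation class are joined by a chain of such families, $RR_X$ depends only on the deformation class.

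For (2) I would apply Theorem~\ref{thm:constants} to $\alpha=Td_{2n}(X)\in H^{4n}(X)$, which is automatically of type $(2n,2n)$ for every small deformation; here $k=n$, so~\eqref{eq:todd} reads $\int_X Td_{2n}(X)=a_0$. The left-hand side equals $\int_X Td(X)$, since $Td_{2n}(X)$ is the top-degree component of the Todd class, and by Hirzebruch--Riemann--Roch applied to $\O_X$ this is $\chi(X,\O_X)$. Finally, because $X$ carries a holomorphic symplectic form $\sigma$ spanning $H^{2,0}(X)$, the space $H^0(X,\Omega_X^{2j})$ is spanned by $\sigma^j$ for $0\le j\le n$ while $H^0(X,\Omega_X^{2j+1})=0$; hence $\chi(X,\O_X)=\sum_{j=0}^{n}h^{0,2j}(X)=n+1$, i.e. $a_0=n+1$.

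For (3) I would instead take $\alpha=Td_0(X)=1\in H^0(X)$ in Theorem~\ref{thm:constants} (so $k=0$): then~\eqref{eq:todd} becomes $\int_X\beta^{2n}=a_n\,q_X(\beta)^n$, which on comparison with~\eqref{eq:fujiki} forces $a_n=c_X$ (equivalently, one matches leading coefficients on the two sides of~\eqref{eq:chi} after substituting the Fujiki relation). For positivity, evaluate at a Kähler class $\omega$: then $\int_X\omega^{2n}>0$, being a positive multiple of the volume of $X$, while $q_X(\omega)>0$ since $q_X$ has signature $(1,h^{1,1}(X)-1)$ on $H^{1,1}(X,\R)$ with the Kähler cone contained in its positive cone; therefore $c_X=\int_X\omega^{2n}/q_X(\omega)^n>0$. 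On the difficulty: once Theorem~\ref{thm:constants} is available everything is bookkeeping, the only external inputs being the Hodge numbers of a HK manifold (for (2)) and the deformation invariance of Chern classes and of $q_X$ (for (1)), both classical. I expect the flatness argument in (1) --- justifying carefully that the integrals defining $a_i$ stay constant along a connected family --- to be the point most deserving a precise statement, though it is not a genuine obstacle.
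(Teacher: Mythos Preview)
Your proposal is correct and follows essentially the same route as the paper's proof: deformation invariance of the Todd classes (and of $q_X$) for (1), identification of $a_0$ with $\chi(X,\O_X)=n+1$ for (2), and comparison of the defining relation for $a_n$ with the Fujiki relation~\eqref{eq:fujiki}, evaluated at a K\"ahler class, for (3). The only difference is one of detail: the paper dispatches (1) in a single sentence (``Todd classes are deformation invariants, hence so is each $a_i$'') and cites \cite{Beau83} for $\chi(X,\O_X)=n+1$, whereas you spell out the Gauss--Manin/flatness argument and recompute $\chi(X,\O_X)$ from the Hodge numbers of a HK manifold --- both welcome elaborations rather than a different strategy.
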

\begin{proof}
We have already observed that the Todd classes are a deformation invariant of $X$. Hence each $a_i$ (and therefore $RR_X$) is also a deformation invariant of $X$. The constant term of $RR_X$ is the holomorphic Euler characteristic of $X$, this was computed \cite{Beauville83} to be $n+1$. The constant $a_n = C(Td_0(X))$ is given by \eqref{eq:fujiki} so it is equal to $c_X$. Observe that $c_X$ is positive because the left hand side of the Fujiki relation \eqref{eq:fujiki} is a volume form.

By the first item we can assume $X$ to be projective. Nieper \cite{Nieper03} computed
\[
\int_X c_2(X)\smile c_1(L)^{2n-2} = \binom{2n-2}{n-1}\left(\int_Xc_2(X)(\sigma\overline{\sigma})^{n-1}\right) \cdot q_X(L)^{n-1}.
\]
The second Todd class of $X$ is a positive multiple of $c_2(X)$, and if $L$ is an ample line bundle, then $q_X(L)>0$. Therefore $a_{n-1}$ is positive if and only if $\int_Xc_2(X)(\sigma\overline{\sigma})^{n-1}$ is positive.  Fixing a HK metric compatible with the symplectic structure, last quantity is a positive multiple of the $L^2$-norm of the Riemann curvature tensor (see \cite{Nieper03}), hence positive.
\end{proof}

In view of the previous Lemma we can speak of the Riemann-Roch polynomial for a deformation \emph{type}. This has been done for the two series of examples introduced by Beauville.

\begin{exmp}[\cite{EGM01}, Lemma 5.1]\label{exmp:K3}
Let $X$ be a HK of $K3^{[n]}$-type, then the Riemann-Roch polynomial is given by
\[
RR_X(t) = \binom{t/2 + n + 1}{n}.
\]
\end{exmp}

\begin{exmp}[\cite{Nieper03}, Lemma 5.2]\label{exmp:Kum}
Let $X$ be a HK of $\text{Kum}_n$-type, then the Hilbert polynomial takes the form
\[
RR_X(t) = (n+1)\binom{t/2 + n}{n}.
\]
\end{exmp}

We will say that the Riemann-Roch polynomial is of $K3^{[n]}$-type or $\text{Kum}_n$-type if it corresponds to one of the two examples above. Now we can state precisely the main result of this section.

\begin{thm}\label{thm:RR}
The Riemann-Roch polynomials for the deformation class of OG6 and OG10 are of $Kum_3$-type and $K3^{[5]}$-type respectively.
\end{thm}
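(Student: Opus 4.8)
The plan is to use \eqref{eq:chi}, which gives $\chi(X,L) = RR_X(q_X(L))$ for every line bundle $L$: since $RR_X$ has degree $n$ (with $n=3$ for OG6 and $n=5$ for OG10), it is enough to evaluate $\chi(X,L)$ for sufficiently many line bundles of known Beauville--Bogomolov square. By Lemma \ref{lem:RRproperties} two coefficients are already fixed: $a_0 = n+1$ and $a_n = c_X$, the Fujiki constant, whose value for OG6 and OG10 is known and already equals the one of $\text{Kum}_3$ and $K3^{[5]}$ respectively. Thus for OG6 only two further evaluations are required, while for OG10 I will pin down the whole polynomial from one extra geometric construction together with $a_0$ and $a_n$.

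For OG10 I realize $X$ as the compactified intermediate Jacobian fibration $\pi\colon X\to\P^5$ of a general cubic fourfold \cite{LSV17}. There are two natural classes: the pullback $L_0 = \pi^\ast\O_{\P^5}(1)$, isotropic ($q_X(L_0)=0$) by the general theory of Lagrangian fibrations; and the relative theta divisor $\Theta$, restricting on the general fibre to a principal polarization on the intermediate Jacobian of a smooth hyperplane section (a $5$-dimensional ppav). The crucial point is that $\pi_\ast\O_X(\Theta)$ is a line bundle on $\P^5$: its generic rank is $\chi(\Theta|_{\mathrm{fibre}}) = 1$, and since $K_X\equiv 0$ and $\Theta$ is $\pi$-big and $\pi$-nef, relative Kawamata--Viehweg vanishing gives $R^q\pi_\ast\O_X(\Theta)=0$ for $q>0$, so $\pi_\ast\O_X(\Theta)$ is locally free of rank one, say $\O_{\P^5}(d)$. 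Then the projection formula yields, for all $k\in\Z$,
\[
\chi(X,\Theta+kL_0) \;=\; \chi\!\big(\P^5,\O_{\P^5}(d+k)\big) \;=\; \binom{d+k+5}{5}.
\]
Comparing this with $\chi(X,\Theta+kL_0) = RR_X\big(q_X(\Theta)+2k\,q_X(\Theta,L_0)\big)$ as polynomials in $k$ (note $q_X(\Theta,L_0)\neq 0$, otherwise the left side would be constant in $k$) gives $RR_X(t) = \binom{\alpha t+\beta}{5}$ for constants $\alpha,\beta$. These are determined by the two known coefficients: the leading-coefficient condition $\alpha^5/5! = c_X/10!$ forces $\alpha = 1/2$, and $\binom{\beta}{5} = RR_X(0) = n+1 = 6$ forces $\beta = 6$ --- the only value of $\beta$ compatible with the positivity of the coefficients of $RR_X$ established in \cite{Jiang20} (alternatively, $\beta$ can be computed directly from $d$ and $q_X(\Theta)$). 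Hence $RR_X(t) = \binom{t/2+6}{5}$, the $K3^{[5]}$-type polynomial.

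For OG6 I instead extract the two missing evaluations from the explicit descriptions of the two models in \cite{MRS2018} and \cite{Nagai2014}. From each model one reads off a prime divisor $D_i\subset X$ ($i=1,2$) that is explicitly fibred in rational curves over a base $B_i$, together with its cohomology class, in particular its square $q_X(D_i)<0$. Uniruledness of the fibre then gives $\chi(\O_{D_i}) = \chi(\O_{B_i})$, and feeding $D_i$ into $0\to\O_X(-D_i)\to\O_X\to\O_{D_i}\to 0$ yields
\[
RR_X\big(q_X(D_i)\big) \;=\; \chi(X,-D_i) \;=\; \chi(X,\O_X)-\chi(\O_{D_i}) \;=\; (n+1)-\chi(\O_{B_i}).
\]
Together with $a_0 = n+1$ and $a_n = c_X$, the two values $RR_X(q_X(D_1)),\,RR_X(q_X(D_2))$ determine the degree-$3$ polynomial $RR_X$ by interpolation; one then checks it equals $4\binom{t/2+3}{3}$, the $\text{Kum}_3$-type polynomial of Example \ref{exmp:Kum}. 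If the two squares happen to coincide, or simply as a cross-check, further evaluations are obtained by iterating the exact sequence with $-2D_i$ (of square $4q_X(D_i)$), which requires computing $\chi\big(\O_{D_i}(-mD_i|_{D_i})\big)$ from the normal bundle along the rulings.

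I expect the real work to be purely geometric. For OG10 the delicate input is the positivity of the relative theta divisor on the \emph{compactified} fibration --- specifically, its $\pi$-nefness in the presence of the degenerate fibres over the discriminant in $\P^5$, on which the relative vanishing above depends --- so that part leans on the boundary analysis of \cite{LSV17}. For OG6 the subtlety is extracting from the two models, correctly and with the singular loci of the $D_i$ under control, both the squares $q_X(D_i)$ and the bases $B_i$ of the rulings (and, if one iterates, the normal-bundle computation above). Once this geometric data is in hand, the remaining steps --- the Leray/projection-formula bookkeeping, the polynomial interpolation, and the comparison with the two Examples --- are routine.
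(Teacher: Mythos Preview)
Your proposal is correct and follows essentially the same route as the paper: for OG10 you use the LSV Lagrangian fibration with the relative theta divisor to force $RR_X(t)=\binom{t/2+6}{5}$, and for OG6 you interpolate using the two $\P^1$-fibred divisors from \cite{MRS2018} and \cite{Nagai2014} (the paper's $\widetilde{\Sigma}$ and $\widetilde{B}$, with $q_X=-8,-2$ and $\chi=-4,0$). The only minor deviations are that the paper cites Sawon's result \cite{Saw16} instead of relative Kawamata--Viehweg for the local freeness of $\pi_\ast\O_X(\Theta)$, and pins down your constant $\beta$ by observing that $\binom{\beta}{5}=6$ has a unique rational solution rather than invoking \cite{Jiang20}.
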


The theorem will be proved in Propositions \ref{prop:OG10} and \ref{prop:OG6} below.

\section{Abelian fibered CY varieties}\label{sec:abelianfiberedCY}

Let $\pi:X\to B$ be a flat surjective morphism with connected fibers between projective normal complex varieties. Denote by $X_b$ the schematic fiber of $b\in B$. For the rest of this section we assume that:
\begin{itemize}
    \item $X$ has rational singularities and $\omega_X$ is trivial.
    \item Every smooth fiber $X_b$ is an abelian variety.
\end{itemize}

Denote by $\O_B(1)$ an ample line bundle on $B$ and let $F = \pi^*(\O_B(1))$ be the pullback. Let $L$ be a $\pi$-ample line bundle on $X$. Whenever $X_b$ is smooth the restriction $L_b := L|_{X_b}$ defines a polarization of the abelian variety $X_b$.

Recall that to any polarization on an abelian variety one can associate a tuple of positive integers $(d_1,\dots,d_n)$ which is called the polarization type, see \cite{Lange92}, in the following way:
Since $X_b$ is an abelian variety we have an identification $H^2(X_b,\Z)\cong \bigwedge^2H_1(X_b,\Z)^\vee$, hence we can interpret $L_b$ as an alternating integral form on the lattice $H_1(X_b,\Z)$. Therefore we can find a basis of $H_1(X_b,\Z)$ for which $L_b$ has the form
\[
\begin{pmatrix}
0 & D\\
-D & 0
\end{pmatrix}
\]
where $D = \mathrm{diag}(d_1,\dots,d_n)$ is an integral diagonal matrix with $d_i>0$ and $d_i|d_{i+1}$. We will denote by $(d_1,\dots,d_n)$ the type of $L_b$. Since the morphism is flat the type remains constant on the smooth locus of $\pi$. The following is a generalization of \cite[Claim 12]{Saw16}.

\begin{thm}\label{thm:sawon}
Let $L$ be a $\pi$-ample line bundle on $X$ and let $(d_1,\dots,d_n)$ be the type of $L_b$ for a smooth fiber $X_b$. Then for any $m\in\Z$ the sheaf $\pi_*(L\otimes F^{\otimes m})$ is locally free of rank $d_1\cdot\cdot\cdot d_n$ and all higher direct images vanish. Moreover, 
\[
h^p(X_b,(L\otimes F^{\otimes m})|_{X_b}) =  
\begin{cases}
d_1\cdot\cdot\cdot d_n & p=0, \\
0 & p>0.
\end{cases}
\]
\end{thm}
\begin{proof}
Let $k>0$ be an integer such that $M = L\otimes F^{\otimes k}$ is ample. Let $X_b$ be a smooth fiber, and denote by $M_b$ the restriction of $M$. Then
\[
h^p(X_{b},M_b) = h^p(X_b,L_b)=
\begin{cases}
d_1\cdot\cdot\cdot d_n & p=0,\\
0 & p>0.
\end{cases}
\]
Therefore the higher direct image sheaves $R^p\pi_*M$ are torsion for $p>0$. Let $\epsilon:\widetilde{X}\to X$ be a resolution of singularities of $X$. Since $X$ has rational singularities, hence $\epsilon_*(\omega_{\widetilde{X}}) =\omega_X=\O_X$ and $R^q\epsilon_*\omega_{\widetilde{X}} = 0$ for every $q>0$. Therefore the Grothendieck spectral sequence 
\[
R^p\pi_*(R^q\epsilon_*\omega_{\widetilde{X}}\otimes M)\implies R^{p+q}(\epsilon\circ\pi)_*(\omega_{\widetilde{X}}\otimes \epsilon^*M).
\]
degenerates and so $R^{p}(\epsilon\circ\pi)_*(\omega_{\widetilde{X}}\otimes \epsilon^*M)\cong R^p\pi_*(M)$. On the other hand, the divisor $\epsilon^*(M)$ is big and nef, so Theorem 2.2. in \cite{Hacon04} states  that
$R^{p}(\epsilon\circ\pi)_*(\omega_{\widetilde{X}}\otimes \epsilon^*M)$ is torsion-free for $p\geq 0$. We conclude that $R^p\pi_*(M)$ must vanish for $p>0$.

Theorem 12.11 of \cite{Hart77} states that if $H^p(X_b,M_b)$ vanishes for all $b\in B$, then the natural map
\[
R^{p-1}\pi_*M\otimes_{\O_b}k(b)\to H^{p-1}(X_b,M_b)
\]
is an isomorphism for all $b\in B$. Since $H^{n+1}(X_b, M_b)$ vanishes for all $b\in B$ by dimension reasons, and by the previous reasoning also $R^n\pi_*(M) = 0$, then $H^n(X_b,M_b)$ vanishes for all $b\in B$. Continuing by reverse induction, we find that $H^p(X_b,M_b)=0$ for all $p>0$ and all $b\in B$.

Finally, $\pi:X\to B$ is a flat family and 
\[
h^0(X_b,M_b) = \chi(X_b,M_b)
\]
is topological, so for all $b\in B$ we find that $h^0(X_b,L_b)$ agrees with the value $d_1\cdot\cdot\cdot d_n$ for a smooth fibre. Theorem 12.11 of \cite{Hart77} implies that the sheaf $\pi_*(M)$ is locally free of rank $d_1\cdot\cdot\cdot d_n$ as claimed. The projection formula for higher direct images implies
\[
R^p\pi_*(L\otimes F^{\otimes m}) \cong R^p\pi_*(L\otimes F^{\otimes k})\otimes \O_{B}(m-k) \cong R^p\pi_* M\otimes\O_{B}(m-k)
\]
for any $m\in\Z$, the proposition follows.
\end{proof}

Now that we have proved that the sheaf $\pi_*L$ is locally free we can study its positivity properties. The next Theorem follows closely Mourougane' strategy in \cite{Mourougane97}. We will need the following well-known Lemma. Recall that a vector bundle $\mathcal{E}$ on a projective variety is called \emph{nef} if the canonical line bundle $\O_{\P(\mathcal{E})}(1)$ is nef.
\begin{lem}\label{lem:ggthennef}
Let $\mathcal{E}$ be a vector bundle over a projective variety $Y$. If there exists an ample line bundle $M$ on $Y$ such that for all $s$ the vector bundle $\mathcal{E}^{\otimes s}\otimes M$ is globally generated, then $\mathcal{E}$ is nef.
\end{lem}
\begin{proof}
See Example 6.2.13 in \cite{PAG2}.
\end{proof}

\begin{thm}
Let $L$ be a big and nef line bundle on $X$ that is $\pi$-ample. Then the vector bundle $\pi_*L$ is nef.
\end{thm}
\begin{proof}
For any integer $s>0$ define $X^{(s)} := X\times_{B}\dots\times_{B} X$ and let $\pi^{(s)}:X^{(s)}\to B$ be the induced map. This is a flat map because flatness is preserved under base-change. Denote by $\text{pr}_i:X^{(s)}\to X$ the $i$-th projection. Define the line bundle $L^{(s)} := \otimes_{i=1}^s \text{pr}_i^*(L)$.

\begin{claim}
$\pi_*(L)^{\otimes s}\cong \pi^{(s)}_*(L)$.
\end{claim}
\begin{pfclaim}
Indeed, we will proceed by induction, the case $s=1$ being trivial. Use the following diagram 
\begin{equation}\label{eq:vieweghproducttrick}
\begin{tikzcd}
X^{(s)}\ar[rr,"\text{pr}_s"]\ar[d,"p"]\ar[rrd,"\pi^{(s)}"] & & X\ar[d,"\pi"]\\
X^{(s-1)}\ar[rr,"\pi^{(s-1)}"] & & B
\end{tikzcd}
\end{equation}
where $p$ denotes the canonical map given by base-change. Apply projection formula twice and flat base change
\[
\begin{split}
\pi_*^{(s)}(L^{(s)}) &= \pi_*^{(s)}(p^*(L^{(s-1)}\otimes \text{pr}_s^*(L)))\\
&= \pi_*^{(s-1)}(p_*(p^*(L^{(s-1)}\otimes \text{pr}_s^*(L)))) \\
&= \pi_*^{(s-1)}(L^{(s-1)}\otimes p_*(\text{pr}_s^*(L)))\\
&= \pi_*^{(s-1)}(L^{(s-1)}\otimes {\pi^{(s-1)}}^*(\pi_*(L))\\
&= \pi_*(L^{(s-1)})\otimes \pi_*(L)) = \pi_*(L)^{\otimes s}.
\end{split}
\]
Last equality follows by induction hypothesis.
\qed
\end{pfclaim}

We want to apply Lemma \ref{lem:ggthennef} to our case. By Theorem \ref{thm:sawon} we know that $R^p\pi_* L = 0$ for $p>0$ so the Leray Spectral Sequence for $L$ degenerates and we have that
\begin{equation}\label{eq:leray}
H^i(B,\pi_*(L\otimes F^{\otimes m})) = H^i(X,L\otimes F^{\otimes m}) = 0 
\end{equation}

for all $i>0$ and $m>0$. By replacing $\O_B(1)$ with a suitable multiple we can assume it to be very ample, hence $\pi_*L$ is $n$-regular with respect to $\O_B(1)$ in the sense of Castelnuovo-Mumford. Therefore the vector bundle $\pi_*L\otimes\O_{B}(n)$ is globally generated. This is the case $s=1$ of  Lemma \ref{lem:ggthennef}, for $s>1$ is just an application of Künneth formula. Indeed, for all $m>0$ we have
\[
H^p(X^{(s)},L^{(s)}\otimes {\pi^{(s)}}^*(\O_{B}(m))) = \bigoplus_{i+j=p} H^i(X^{(s-1)},L^{(s-1)})\otimes H^j(X,L\otimes F^{\otimes m})
\]
The line bundle $L$ is big and nef, so by Kawamata-Viehweg all of its higher cohomology groups vanish. We apply Künneth formula once again and use equation \eqref{eq:leray} to conclude that $H^p(X^{(s)},L^{(s)}\otimes {\pi^{(s)}}^*(\O_{B}(m)))$ must vanish for all $p>0$ and $m\geq 0$. This ensures that the sheaf ${\pi_{(s)}}_*(\O_X(L))$ is $n$-regular. Therefore 
\[
 {\pi_{(s)}}_*(L)\otimes\O_{B}(n) \cong \pi_*(\O_X(L))^{\otimes s}\otimes\O_{B}(n)
 \]
 is also globally generated. By Lemma \ref{lem:ggthennef} the vector bundle $\pi_*(L)$ is nef.
\end{proof}

\section{Riemann-Roch polynomial for OG10}\label{sec:RROG10}

We will use the realization of OG10 constructed by Laza-Saccà-Voisin in \cite{LSV17}, denoted by $J$. This has a Lagrangian fibration $\pi:J\to \P^5$. Let $\Theta$ be the relative theta divisor. This is $\pi$-ample by \cite{LSV17}, Section 5. Let $J_t$ be a smooth fiber of $\pi$ and let $\Theta_t$ be the restriction of $\Theta$ to the fiber. By Theorem \ref{thm:sawon} we have that $\pi_*\O_J(\Theta)$ is locally free of rank $h^0(J_t,\Theta_t)$. The relative theta divisor is a principal polarization when restricted to any smooth fiber, hence $h^0(J_t,\Theta_t) = 1$. We obtain

\begin{prop}
$\pi_*(\O_J(\Theta))\cong \O_{\P^5}(k)$ for some $k\geq 0$. In particular
\begin{equation}\label{eq:binomialRR}
   \chi(J,\Theta +mF) = h^0(J,\Theta +mF) = \binom{k+m+5}{5}
\end{equation}
for all $m\geq 0$.
\end{prop}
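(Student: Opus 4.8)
The plan is to combine Sawon's theorem (Theorem~\ref{thm:sawon}) with elementary facts about vector bundles on $\P^5$ and a cohomology-and-base-change argument. First I would apply Theorem~\ref{thm:sawon} with $H = \Theta$: since $\Theta$ is $\pi$-ample and $h^0(J_t,\Theta_t) = 1$ (the relative theta divisor restricts to a principal polarization on each smooth fiber), the sheaf $\pi_*\O_J(\Theta)$ is locally free of rank $1$ on $\P^5$, i.e. a line bundle. Every line bundle on $\P^5$ is of the form $\O_{\P^5}(k)$ for a unique $k \in \Z$, so $\pi_*\O_J(\Theta) \cong \O_{\P^5}(k)$.

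Next I would argue that $k \ge 0$. Here the point is that $\Theta$ is an effective (indeed globally defined) divisor on $J$, so $H^0(J, \O_J(\Theta)) \ne 0$; by the projection formula and Leray, $H^0(\P^5, \O_{\P^5}(k)) = H^0(J, \O_J(\Theta)) \ne 0$, which forces $k \ge 0$. (One could also invoke that $\Theta$ is relatively ample together with nefness/semipositivity of the pushforward in the Lagrangian-fibration setting, but the bare existence of a global section is enough.)

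For the formula \eqref{eq:binomialRR}, I would use the projection formula once more: for any $m$,
\[
\pi_*\O_J(\Theta + mF) = \pi_*\bigl(\O_J(\Theta)\otimes \pi^*\O_{\P^5}(m)\bigr) \cong \O_{\P^5}(k)\otimes \O_{\P^5}(m) = \O_{\P^5}(k+m),
\]
using that $F = \pi^*\O_{\P^5}(1)$ (the fiber class is the pullback of a hyperplane). Then for $m \gg 0$ the higher direct images $R^i\pi_*\O_J(\Theta+mF)$ contribute nothing to cohomology after twisting — more simply, I would invoke that for $m$ large $H^i(J,\Theta+mF) = H^i(\P^5, \pi_*\O_J(\Theta+mF))$ via the degeneration of the Leray spectral sequence in the relevant range (the higher $R^i\pi_*$ are supported on the discriminant and their twists by $\O_{\P^5}(m)$ have no higher cohomology, while $\pi^*\O_{\P^5}(m)$ kills the contribution from positive $i$ by a Leray/projection argument). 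Hence
\[
h^0(J,\Theta+mF) = h^0(\P^5,\O_{\P^5}(k+m)) = \binom{k+m+5}{5}
\]
for all $m \gg 0$.

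The main obstacle is the passage from $\pi_*\O_J(\Theta+mF) \cong \O_{\P^5}(k+m)$ to the equality of $h^0$'s, i.e. controlling the higher direct images $R^i\pi_*\O_J(\Theta)$ and verifying that after twisting by $\O_{\P^5}(m)$ with $m \gg 0$ they no longer affect $H^0$. Since these higher direct images are coherent sheaves on $\P^5$, Serre vanishing guarantees $H^j(\P^5, R^i\pi_*\O_J(\Theta) \otimes \O_{\P^5}(m)) = 0$ for $j>0$ and $m \gg 0$; combined with the Leray spectral sequence this isolates the $H^0$ term as claimed, but one should be slightly careful that the $R^i\pi_*$ for $i>0$ (which are torsion, supported on the discriminant locus) do not spuriously contribute to $H^0(J,\Theta+mF)$ — they do not, because $H^0$ of the total space only sees $\pi_*$, the $i=0$ row of the Leray spectral sequence. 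This is routine but is the one place where "for all $m \gg 0$" (rather than all $m \ge 0$) is genuinely needed.
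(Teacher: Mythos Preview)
Your argument is correct and follows the same line the paper implicitly takes: the paper simply writes ``We obtain'' after recording $h^0(J_t,\Theta_t)=1$ and invoking Sawon's theorem, so your proof is a careful unpacking of what the paper leaves to the reader. One small remark: the identity $H^0(J,\F) = H^0(\P^5,\pi_*\F)$ holds for \emph{any} sheaf $\F$ directly from the definition of pushforward, so the higher direct images never interfere with $H^0$ and your Serre-vanishing/Leray discussion is unnecessary for the $h^0$ formula itself; the restriction $m\gg 0$ is not actually needed for the stated equality but only later in the paper, where one tacitly replaces $h^0$ by $\chi$ to compare with the Riemann--Roch polynomial.
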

The first equality follows by Theorem \ref{thm:sawon}
The class $F$ is isotropic with respect to $q_J$, we use the polarized version of the Fujiki formula to get
\[
\int_J \Theta^5\smile F^5 = \frac{c_J}{10!}\cdot (5!)^2\cdot 2^5\cdot q_J(F,\Theta)^5.
\]

On the other hand, we can compute the left hand side to be

\[
\int_J \Theta^5\smile F^5  = \int_{J_t}\Theta^5|_{J_t} = \int_{J_t}\Theta_t^5 = 5!
\]
Hence $q_J(\Theta,F) = 1$. In particular $q_J(\Theta + mF) = q_J(\Theta) + 2m$.

\begin{prop}\label{prop:OG10}
The Riemann-Roch polynomial of OG10 is of $K3^{[5]}$-type.
\end{prop}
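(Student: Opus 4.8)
The plan is to identify $RR_{J}$ by comparing it, on the line bundles $\Theta+mF$ (where $F=\pi^{*}\O_{\P^{5}}(1)$), with the Hilbert-polynomial computation \eqref{eq:binomialRR} just set up. First I would note that since $\Theta$ is $\pi$-ample and $F$ is the pullback of an ample class, the divisor $\Theta+mF$ is ample on the total space $J$ for all $m\gg 0$. As $J$ is a HK manifold its canonical bundle is trivial, so Kodaira vanishing gives $H^{i}(J,\Theta+mF)=0$ for $i>0$ and $m\gg 0$; together with \eqref{eq:binomialRR} this yields
\[
\chi(J,\Theta+mF)=h^{0}(J,\Theta+mF)=\binom{k+m+5}{5}\qquad\text{for all }m\gg 0 .
\]

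Next I would feed this into the Riemann--Roch formula \eqref{eq:chi}. Using $q_{J}(\Theta+mF)=q_{J}(\Theta)+2m$ established above, \eqref{eq:chi} reads $\chi(J,\Theta+mF)=RR_{J}\bigl(q_{J}(\Theta)+2m\bigr)$. Hence the polynomials $m\mapsto RR_{J}(q_{J}(\Theta)+2m)$ and $m\mapsto\binom{m+k+5}{5}$ agree for infinitely many integers $m$, so they coincide identically. Substituting $m=\tfrac12\bigl(t-q_{J}(\Theta)\bigr)$ and writing $c:=2k-q_{J}(\Theta)$, this says
\[
RR_{J}(t)=\binom{\tfrac{t+c}{2}+5}{5}.
\]

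It then remains to pin down the single unknown $c$. For this I would invoke the normalization $RR_{J}(0)=a_{0}=6$ from Lemma~\ref{lem:RRproperties}, which gives $\binom{\frac{c}{2}+5}{5}=6$. Since $x\mapsto\binom{x}{5}$ attains the value $6$ only at $x=6$ (it is negative for $x<0$, bounded by $1$ in absolute value on $[0,4]$, and strictly increasing on $(4,\infty)$ with $\binom{5}{5}=1$), this forces $c=2$, i.e. $q_{J}(\Theta)=2k-2$. Therefore $RR_{J}(t)=\binom{t/2+6}{5}$, which is exactly the $K3^{[5]}$-type polynomial. As a consistency check, the leading coefficient of this polynomial is $1/(5!\,2^{5})=1/3840=945/10!=c_{J}/10!$, matching the Fujiki constant recorded above.

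The only step requiring genuine care is the first one: one must make sure that $\Theta+mF$ is actually ample on $J$ (not merely nef and big) for $m\gg 0$ — this is precisely where the $\pi$-ampleness of $\Theta$ enters — so that Kodaira vanishing applies and $\chi$ may be replaced by $h^{0}$, and one should check that the implicit thresholds ``$m\gg0$'' here and in \eqref{eq:binomialRR} can be taken simultaneously. Everything after that is formal: polynomial identification together with the universal constraint $a_{0}=n+1$ (and, as a check, $a_{n}=c_{X}$) from Lemma~\ref{lem:RRproperties}.
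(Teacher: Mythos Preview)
Your argument is correct and follows essentially the same route as the paper: identify $RR_J$ via $\chi(J,\Theta+mF)=\binom{k+m+5}{5}$, substitute $t=q_J(\Theta)+2m$, and then use $RR_J(0)=6$ to pin down the remaining constant. You are in fact more careful than the paper on two points: you explicitly invoke Kodaira vanishing to pass from $h^0$ to $\chi$ (the paper leaves this implicit), and you justify uniqueness of the solution to $\binom{x}{5}=6$ over $\R$ rather than merely asserting it over $\Q$.
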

\begin{proof}
In \eqref{eq:binomialRR} we take $t = q(\Theta + mF) = q(\Theta) +2m$ to get
\[
RR_J(t) = \binom{k+\frac{t-q_J(\Theta)}{2} + 5}{5}.
\]
Evaluating at zero we get the following equation
\[
6 = \binom{k-\frac{q_J(\Theta)}{2} + 5}{5}
\]
whose only rational solution is $k-\frac{q_J(\Theta)}{2} =1$, the result follows from Example \ref{exmp:K3}.
\end{proof}

We conclude this section by showing that the same strategy can be used to compute the Riemann-Roch polynomial for HK manifolds of $\mathrm{K}3^{[n]}$-type. Let $(S,L)$ be a K3 surface with $\mathrm{Pic}(S)$ generated by $L$ and let $L^2 = 2d$. By Riemann-Roch we have that $|L|\cong\mathbb{P}^{d+1}$ and every smooth curve $C$ in the linear system is of genus $d+1$. Let $\mathcal{C}/\mathbb{P}^{d+1}$ be the universal family of all curves linearly equivalent to $C$. By the hypothesis on the Picard group of $S$ we have that every curve in the linear system is reduced and irreducible, and therefore its compactified Jacobian is well-defined as the moduli space of rank-one torsion-free sheaves on the curve of degree $k$. Thus we get a fibration $\overline{\mathrm{Jac}}^k(\mathcal{C}/\mathbb{P}^{d+1})\rightarrow\mathbb{P}^{d+1}$ whose general fibre is a $d+1$-dimensional abelian variety.

On the other hand, the moduli space $M(0,L,k-d)$ of $L$-stable sheaves on $S$ with Mukai vector $(0,L,k-d)$ is a HK variety of dimension $2d+2$ by \cite[Example 0.5]{Mukai1984}. The general element of this moduli space is again a degree $k$ line bundle on a smooth curve in the linear system $|L|$, thought of as a torsion sheaf on $S$. We can also think of the fibration structure by considering the map
\begin{equation}\label{eq:supportmorphism}
\mathrm{Supp}: M(0,L,k-d)\longrightarrow \P^{d+1}
\end{equation}
taking a sheaf $\F$ to its support $\mathrm{Supp}\F\in|L|$. With the hypothesis on $S$ these two spaces are isomorphic. The following theorem is well-known. See, for example Section 3 in \cite{FMOS2022} for details.

\begin{thm}
Let $(S,L)$ be a K3 surface with $\mathrm{Pic}(S)\cong\Z L$, then all the fibers of the morphism defined in \eqref{eq:supportmorphism} are irreducible. Moreover, the space $M(0,L,0)$ admits a canonically defined \emph{theta divisor} $\Theta_L$ given by
\[
\Theta_L = \{ \F\in M(0,L,0) \text{ such that } h^0(S,\F) = h^1(S,\F) \neq 0 \}
\]
which is ample on each fiber and with $q(\Theta_L) = -2$.
\end{thm}

The same proof as in the OG10 deformation type computes the Riemann-Roch polynomial, and since the dimension of $M(0,L,0)$ is $2d+2$, we cover all the dimensions of manifolds of $\mathrm{K}3^{[n]}$-type.

\section{Riemann-Roch polynomial for OG6}\label{sec:RROG6}

Although there does exist a HK manifold of OG6-type with a Lagrangian fibration, we cannot use the same strategy as we did for OG10 because the abelian varieties appearing as smooth fibers are not principally polarized --i.e. there is not an ample divisor restricting to a principal polarization on every smooth fiber. Hence, we use an alternative method based on the explicit description of some divisors. 

Let $X$ be a HK manifold of OG6-type. The formula \eqref{eq:chi} for $X$ is:

\begin{equation}\label{eq:RRforOG6}
    \chi(X,L) = a_0 + \frac{a_1}{2!}q_X(L) + \frac{a_2}{4!}q_X(L)^2 + \frac{a_3}{6!}q_X(L)^3 = RR_X(q_X(L))
\end{equation}

By Lemma \ref{lem:RRproperties} we have $a_0 = 4$ and also $a_3 = c_X = 60$, by \cite{Rap07}. We will find divisors whose invariants reduce equation \ref{eq:RRforOG6} to a linear system of equations. In order to do this, we will introduce the divisors $\widetilde{\Sigma}$ and $\widetilde{B}$. They are both effective divisors for the variety of OG6-type considered in \cite{Rap07}. Their Beauville-Bogomolov forms were already computed in Theorems 3.3.1 and 3.5.1 of  \cite{Rap07} and the Euler characteristic will be computed here.

\begin{table}[ht]
\centering
\begin{tabular}{l|l|l|}
\cline{2-3}
                                           & \textbf{$\chi$} & \textbf{$q_X$} \\ \hline
\multicolumn{1}{|l|}{$\widetilde{\Sigma}$} &        -4         & -8                \\ \hline
\multicolumn{1}{|l|}{$\widetilde{B}$}      &         0        & -2                \\ \hline
\end{tabular}\caption{Invariants for $\widetilde{\Sigma}$ and $\widetilde{B}$.}\label{table}
\end{table}

In the rest of this section we are going to compute the Euler characteristics in Table \ref{table}. For this we need explicit descriptions of the divisors above. Let $A$ be an abelian surface, and denote by $A^\vee$ its dual. Multiplication by $-1$ is an involution on both abelian surfaces and their product $A\times A^\vee$.

\begin{thm}[\cite{MRS2018}, Corollary 2.8]\label{thm:Sigma}
The divisor $\widetilde{\Sigma}$ is a $\P^1$-bundle over the nonsingular variety
\[
\overline{\Sigma} := Bl_{\text{Sing}((A\times A^\vee)/\pm1)}(A\times A^\vee)/\pm1.
\]
\end{thm}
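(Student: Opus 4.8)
The plan is to study O'Grady's symplectic resolution $\pi\colon\widetilde{K}_v\to K_v$ (\cite{OG03}) étale-locally along $\Sigma=\mathrm{Sing}(K_v)$, treating separately the smooth stratum $\Sigma\setminus\Omega$ and the deepest stratum $\Omega:=\mathrm{Sing}(\Sigma)$, and then to glue the resulting local $\P^1$-bundle structures over $\overline{\Sigma}$. Here $\widetilde{\Sigma}$ denotes the strict transform in $\widetilde{K}_v$ of the exceptional divisor over $\Sigma$. First I would recall, from O'Grady's construction and Rapagnetta's analysis \cite{Rap07}, that $K_v$ is singular precisely along $\Sigma$, that sending $[F_1\oplus F_2]$ to the unordered pair $\{\mathfrak{a}_w(F_1),\mathfrak{a}_w(F_2)\}$ identifies $\Sigma$ with $(A\times A^\vee)/\pm1$, and that $\Omega$ is the set of $256$ points where $F_1\cong F_2$, i.e.\ the image of the $2$-torsion of $A\times A^\vee$. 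Since $(A\times A^\vee)/\pm1$ has at each such point a single $\C^4/\pm1$ singularity, which one blow-up resolves with exceptional divisor a $\P^3$, the blow-up $\overline{\Sigma}=Bl_{\Omega}\Sigma$ is smooth and equals $\Sigma$ away from these $\P^3$'s.

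Over $\Sigma\setminus\Omega$ the transverse singularity of $K_v$ at a polystable point $[F_1\oplus F_2]$ with $F_1\not\cong F_2$ is the surface $A_1$-singularity: since $\langle w,w\rangle=2$ one has $\dim\mathrm{Ext}^1(F_1,F_2)=\dim\mathrm{Ext}^1(F_2,F_1)=2$ and $\mathrm{Hom}(F_i,F_j)=0$ for $i\neq j$, so the Kuranishi model is the $\C^*$-symplectic reduction of $\mathrm{Ext}^1(F_1,F_2)\oplus\mathrm{Ext}^1(F_2,F_1)$ at the zero level of its moment map, which is the affine cone $\{x^2+yz=0\}\cong\C^2/\pm1$. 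Blowing up $\Sigma_{\mathrm{red}}$ is transversally the blow-up of the vertex, hence the minimal resolution, whose exceptional curve is canonically $\P(\mathrm{Ext}^1(F_1,F_2))$ (identified with $\P(\mathrm{Ext}^1(F_2,F_1))$ by Serre duality). Thus over $\Sigma\setminus\Omega\cong\overline{\Sigma}\setminus(\mathrm{exc})$ the divisor $\widetilde{\Sigma}$ is the $\P^1$-bundle obtained by projectivizing the relative extension sheaf of the two universal families, and $\widetilde{\Sigma}\to\overline{\Sigma}$ is a $\P^1$-bundle there.

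The remaining, and main, point is a neighbourhood of $p=[F\oplus F]\in\Omega$, where $\dim\mathrm{Ext}^1(F,F)=4$. The analytic germ of $K_v$ at $p$ is — up to a finite group coming from the Kummer-type construction, which has to be pinned down — a fixed $6$-dimensional symplectic singularity realised as a reduction (by the residual $PGL_2$) of the zero fibre of a moment map on the relevant $\mathrm{Ext}^1$-spaces; its singular locus is the minimal nilpotent orbit closure of $\mathfrak{sp}_4$, i.e.\ $\C^4/\pm1$, which matches the germ of $\Sigma$ at $p$ and has transverse type $A_1$ away from the vertex, consistently with the previous paragraph. One then runs O'Grady's blow-ups in this linear-algebraic model and checks three things: the exceptional divisor over $\C^4/\pm1$ is a $\P^1$-bundle over $Bl_0(\C^4/\pm1)$; over its exceptional $\P^3=\P(\mathrm{Ext}^1(F,F))$ this bundle is the flat limit of the family of exceptional $\P^1$'s of the previous paragraph; and the blow-up of the stratum lying over $\Omega$ only base-changes $\widetilde{\Sigma}$ to $\overline{\Sigma}$ without changing its fibres. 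Carrying out this local computation — in particular obtaining the precise normal form of the germ at $\Omega$ and identifying the residual group — is the technical heart of the proof, and the step I expect to be the main obstacle.

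Finally, the two local $\P^1$-bundle structures on $\widetilde{\Sigma}$ are canonically the same, each fibre being the exceptional $\P^1$ of the minimal resolution of the transverse $A_1$-singularity (respectively its flat limit over the $\P^3$'s), so they glue to a $\P^1$-bundle $\widetilde{\Sigma}\to\overline{\Sigma}$ over the smooth variety $\overline{\Sigma}=Bl_{\Omega}\Sigma$, which is the assertion.
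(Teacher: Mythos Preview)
The paper does not prove this theorem at all: it is quoted verbatim as \cite{MRS2018}, Corollary~2.8, and is used only as input to compute $\chi(\O_{\widetilde{\Sigma}})$. So there is no proof in the paper to compare your proposal against.

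Your sketch is a reasonable outline of how such a result is actually established in the source (local analysis of the Kuranishi/moment-map model along the two strata of $\Sigma$, transverse $A_1$ over the smooth locus, and a careful study of the $\mathfrak{sp}_4$-type germ at the $256$ points of $\Omega$), and you correctly flag the deepest-stratum computation as the nontrivial step. But for the purposes of this paper no argument is needed: the statement is a black box, and the only thing the paper extracts from it is that $\widetilde{\Sigma}\to\overline{\Sigma}$ is a $\P^1$-bundle, so that $\chi(\O_{\widetilde{\Sigma}})=\chi(\O_{\overline{\Sigma}})$.
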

\begin{thm}[\cite{Nagai2014}]\label{thm:Nagai}
$\widetilde{B}$ is a $\P^1$-bundle over $K(A)\times K(A^\vee)$, where $K(A)$ denotes the smooth Kummer surface associated to $A$.
\end{thm}

\begin{prop}\label{prop:eulerchar}
With the notation as above we have:
\[
\chi(\overline{\Sigma},\O_{\overline{\Sigma}}) = -8 \quad\text{and}\quad \chi(\widetilde{B},\O_{\widetilde{B}}) = 4.
\]
\end{prop}
\begin{proof}
The Euler characteristic is multiplicative for smooth fibrations, so $\chi(\O_{\widetilde{\Sigma}}) = \chi(\O_{\overline{\Sigma}})$. The Hodge numbers are given by
\[
H^{4,q}(\overline{\Sigma}) = H^{4,q}(A\times \widehat{A})^{inv}.
\]
Since the action in cohomology is given on differential forms by multiplication by $(-1)^{\deg}$, the cohomology groups are
\[
H^{4,q}(A\times \widehat{A}) = 
\begin{cases}
    0 & \text{if} \;\;\;\; q=2,4\\
    6 & \text{if} \;\;\;\; q=3\\
    1 & \text{if} \;\;\;\; q=1,5.
\end{cases}
\]
It follows that $\chi(\O_{\overline{\Sigma}}) = -8$. For the other divisor $\widetilde{B}$ in the table \ref{table} the Theorem \ref{thm:Nagai} yields $\chi(\O_{\widetilde{B}}) = \chi(\O_{K(A)\times K(A^\vee)}) = 4$, since both $K(A)$ and $K(A^\vee)$ are K3 surfaces. 
\end{proof}

The computations of the Euler characteristic given above compute the Euler characteristics for these divisors on their respective HK varieties by the following.

\begin{lem}\label{lem:eulercharacteristicdivisor}
Let $X$ be a HK variety of OG6-type and let $E\subseteq X$ be an effective smooth divisor, then $\chi(X,\O(E)) = 4-\chi(E,\O_E)$.
\end{lem}
\begin{proof}
The exact sequence induced by a nonzero section of $E$ gives a formula for its Euler characteristic:
\begin{equation}\label{eq:eulercharacteristicdivisor}
    \chi(X,\O(E)) = \chi(X,\O_X) + \chi(E,\O_E(E)) = 4 + \chi(E,\O_E(E)).
\end{equation}
By adjunction $\O_E(E)\cong\omega_E$. If $E$ is smooth, then by Serre duality $\chi(E,\omega_E) = -\chi(E,\O_E)$.
\end{proof}

Hence by Lemma \ref{lem:eulercharacteristicdivisor} and Proposition \ref{prop:eulerchar} we have computed the Euler characteristics in Table \ref{table}. The following proposition finishes the proof of the main theorem.

\begin{prop}\label{prop:OG6}
The Riemann-Roch polynomial of OG6 is of $Kum_3$-type.
\end{prop}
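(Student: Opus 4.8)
The plan is to pin down $RR_X$ by determining its four coefficients $a_0,a_1,a_2,a_3$ and comparing with the $\mathrm{Kum}_3$-type polynomial of Example \ref{exmp:Kum}. Expanding that polynomial for $n=3$ gives
\[
4\binom{t/2+3}{3}=4+\tfrac{11}{3}t+t^2+\tfrac{1}{12}t^3,
\]
so, in view of \eqref{eq:RRforOG6}, the statement is equivalent to the four equalities $a_0=4$, $a_1=\tfrac{22}{3}$, $a_2=24$, $a_3=60$. Two of them are already available: $a_0=4$ by Lemma \ref{lem:RRproperties}, and $a_3=c_X=60$ by \cite{Rap07}. (The latter also serves as a sanity check, since the leading coefficient of the $\mathrm{Kum}_3$ polynomial is $c_X/6!=60/720=1/12$.) Hence it suffices to produce two independent linear relations among the two remaining unknowns $a_1$ and $a_2$.

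These two relations come from evaluating \eqref{eq:RRforOG6} on the line bundles $\O_X(\widetilde\Sigma)$ and $\O_X(\widetilde B)$ and inserting the values of $\chi$ and $q_X$ recorded in Table \ref{table}. Substituting $q_X(\widetilde\Sigma)=-8$, $\chi(X,\O_X(\widetilde\Sigma))=-4$, then $q_X(\widetilde B)=-2$, $\chi(X,\O_X(\widetilde B))=0$, into \eqref{eq:RRforOG6} with $a_0=4$ and $a_3=60$ already plugged in, yields a $2\times 2$ linear system in $a_1,a_2$ with nonzero determinant; solving it gives $a_1=\tfrac{22}{3}$ and $a_2=24$, exactly as needed. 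This last step is purely mechanical, so the real content of the proposition lies in justifying Table \ref{table}.

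For that one uses the geometry of \cite{MRS2018} and \cite{Nagai2014}. Since $RR_X$ is a deformation invariant (Lemma \ref{lem:RRproperties}), it is enough to work on one projective OG6 manifold, and on a suitable birational model one must: (i) identify $\widetilde\Sigma$ and $\widetilde B$ as explicit (uniruled) divisors; (ii) compute their Beauville--Bogomolov squares, using Rapagnetta's description of the BB-lattice of OG6 together with the intersection data of the chosen model; and (iii) compute $\chi(X,\O_X(\widetilde\Sigma))$ and $\chi(X,\O_X(\widetilde B))$. For (iii) the natural device is the exact sequence
\[
0\longrightarrow \O_X\longrightarrow \O_X(\widetilde D)\longrightarrow \O_{\widetilde D}(\widetilde D)\longrightarrow 0,
\]
which gives $\chi(X,\O_X(\widetilde D))=4+\chi(\widetilde D,N_{\widetilde D/X})$, reducing the problem to the Euler characteristic of the normal bundle; as $\widetilde D$ is uniruled, $N_{\widetilde D/X}$ has negative degree on a general ruling curve, so this is computed by pushing forward along the ruling and working on the lower-dimensional base.

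The main obstacle is precisely this geometric input — correctly locating $\widetilde\Sigma$ and $\widetilde B$ in the models of \cite{MRS2018} and \cite{Nagai2014}, checking that their classes land where expected in the BB-lattice, and carrying out the normal-bundle Euler characteristic computations. Once Table \ref{table} is established, the proof reduces to the two-line linear algebra described above.
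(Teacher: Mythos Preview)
Your proposal is correct and follows the same overall strategy as the paper: fix $a_0=4$ and $a_3=60$ from Lemma~\ref{lem:RRproperties} and \cite{Rap07}, then use the two divisors $\widetilde\Sigma$ and $\widetilde B$ with the values in Table~\ref{table} to solve a $2\times 2$ linear system for $a_1,a_2$, and check against the $\mathrm{Kum}_3$ polynomial. Your numerical expansion and the identification of the remaining content with justifying Table~\ref{table} are both accurate.

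The one place where the paper is slightly slicker than your sketch is step~(iii). You propose to compute $\chi(\widetilde D,N_{\widetilde D/X})$ by exploiting that the normal bundle has negative degree on the ruling curves and pushing forward along the $\P^1$-fibration. The paper instead observes that, since $K_X$ is trivial, adjunction gives $N_{\widetilde D/X}\cong\omega_{\widetilde D}$, and then Serre duality yields $\chi(\widetilde D,N_{\widetilde D/X})=-\chi(\widetilde D,\O_{\widetilde D})$. For a $\P^1$-bundle this is simply minus the holomorphic Euler characteristic of the base, so one only needs $\chi(\O_{\overline\Sigma})$ and $\chi(\O_{K(A)\times K(A^\vee)})$, both of which are immediate. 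This avoids any direct analysis of the normal bundle along the ruling. The Beauville--Bogomolov squares are taken straight from \cite{Rap07} (Theorems~3.3.1 and~3.5.1), as you anticipated.
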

\begin{proof}
If we plug into Equation \eqref{eq:RRforOG6} the known invariants of $\widetilde{\Sigma}$ and $\widetilde{B}$ given in Table \ref{table}, then we have a complete system of \emph{linear} equations because the Beauville-Bogomolov square of both divisors is different. Hence the constants $a_1$ and $a_2$ are uniquely determined. The coefficients $a_0,\dots,a_4$ are the same as those of the Riemann-Roch polynomial of $Kum_3$-type given in Example \ref{exmp:Kum}.
\end{proof}

\bibliography{bib.bib}{}
\bibliographystyle{alpha}

\newpage
\appendix

\section{Chern numbers of OG10, by Yalong Cao and Chen Jiang}
An interesting question is to compute topological invariants (e.g.  topological Euler characteristic and Chern numbers) of hyperk\"ahler varieties. 

For a HK of $K3^{[n]}$-type, Ellingsrud--G\"{o}ttsche--Lehn \cite{egl} showed that the Chern numbers can be efficiently calculated,  in terms of the Chern numbers of the varieties $(\mathbb{P}^2)^{[k]}$ and $
(\mathbb{P}^1 \times \mathbb{P}^{1})^{[k]}$
(which  can be calculated by Bott's residue formula via \cite{ES87, ES96}), though no explicit formula is known.

For a HK of $\text{Kum}_{n}$-type, Nieper \cite{nieper2002} showed that the Chern numbers can be efficiently calculated,   in terms of the Chern numbers of the varieties $(\mathbb{P}^2)^{[k]}$ (see also \cite{BN, SawonPhd}).

For OG6, the Chern numbers are computed by Mongardi--Rapagnetta--Sacc\`{a}
\cite[Corollary 6.8]{mrs} (see also \cite{Rapa-thesis}).

For OG10, the topological Euler characteristic is computed by Mozgovyy \cite{Moz-thesis} (see also \cite{HLS}) and the Hodge numbers are computed by  de Cataldo--Rapagnetta--Sacc\`a \cite{CRS}. But the Chern numbers of OG10 have not been computed yet. In fact, Hodge numbers could not provide enough linear equations to solve Chern numbers (cf. \cite[Section 5.2]{SawonPhd}). 

As an application to the Riemann--Roch polynomial of OG10, combining with the Hodge numbers, we can compute $7$ Chern numbers of OG10.

\subsection{Relations of $q_X$ and $\lambda$ via Riemann--Roch polynomials}
Let $X$ be a HK variety. For a line bundle $L$ on $X$, Nieper \cite[Definition 17]{nieper} defined the {\it characteristic value} of $L$,
$$
\lambda(L):=\begin{cases}\frac{24n\int_{X}\ch(L)}{\int_{X}c_{2}(X) \ch(L)} & \text{if well-defined;}\\ 0 & \text{otherwise.}\end{cases}
$$
Note that $\lambda(L)$ is a positive (topological constant) multiple of $q_X(c_1(L))$ (cf. \cite[Proposition 10]{nieper}), more precisely,
$$
\lambda(L)=\frac{12c_X}{(2n-1)C(c_2(X))}q_X(c_1(L)).
$$
Here we denote the ratio $$M_{\lambda, q_X}:=\lambda(L)/q_X(c_1(L))=\frac{12c_X}{(2n-1)C(c_2(X))}.$$
So for the Riemann--Roch polynomial, we can write it in terms of either $q_X$ (say $\text{\rm RR}_{X, q_X}(t)$) or $\lambda$ (say $\text{\rm RR}_{X, \lambda}(t)$). They coincide up to a multiple of invariable, that is, $\text{\rm RR}_{X, q_X}(t)=\text{\rm RR}_{X, \lambda}(M_{\lambda, q_X}t)$.

 An observation (maybe well-known to experts) is that, even though we do not know the value of  $M_{\lambda, q_X}=\frac{12c_X}{(2n-1)C(c_2(X))}$ priorly, we can see it at once we know the expression of  the Riemann--Roch polynomial, that is, $\text{\rm RR}_{X, q_X}(t)$  determines $M_{\lambda, q_X}$,  $(\text{\rm RR}_{X, \lambda}(t), c_X)$  determines $M_{\lambda, q_X}$.
 Hence as a consequence, 
$\text{\rm RR}_{X, q_X}(t)$ and $(\text{\rm RR}_{X, \lambda}(t), c_X)$ determine each other.

\begin{lem}
The  coefficients of the first 2 leading terms of $\text{\rm RR}_{X, q_X}(t)$ determines $M_{\lambda, q_X}$. The  coefficient of the leading term $\text{\rm RR}_{X, \lambda}(t)$ and  $c_X$ determines $M_{\lambda, q_X}$.
\end{lem}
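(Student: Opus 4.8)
The plan is to unwind the two reformulations of the Riemann--Roch polynomial back to the topological constants $c_X$ and $C(c_2(X))$ that enter the defining ratio $M_{\lambda, q_X} = \frac{12 c_X}{(2n-1)\, C(c_2(X))}$, and to show that in each of the two data sets named in the statement these constants --- equivalently, $M_{\lambda, q_X}$ itself --- are recovered by elementary algebra.

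For the first assertion, I would begin by noting that $\deg RR_{X, q_X} = n$, since its leading coefficient $\frac{a_n}{(2n)!} = \frac{c_X}{(2n)!}$ is nonzero by Lemma \ref{lem:RRproperties}; hence the integer $n$, and with it all the factorials $(2i)!$, are read off from the polynomial. Writing $RR_{X, q_X}(t) = \sum_i b_i t^i$, the coefficient $b_i$ recovers $a_i = (2i)!\, b_i$; in particular the two leading terms give $a_n = c_X$ and $a_{n-1}$. Since $c_1(X) = 0$ the degree-four Todd class is $\mathrm{Td}_2(X) = \tfrac1{12} c_2(X)$, so by the linearity of $\alpha \mapsto C(\alpha)$ in Theorem \ref{thm:constants} we get $a_{n-1} = C(\mathrm{Td}_2(X)) = \tfrac1{12} C(c_2(X))$, i.e. $C(c_2(X)) = 12\, a_{n-1}$. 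Substituting into the definition of $M_{\lambda, q_X}$ collapses the topological constants and leaves $M_{\lambda, q_X} = \frac{a_n}{(2n-1)\, a_{n-1}} = 2n\,\frac{b_n}{b_{n-1}}$, a quantity visibly determined by the two leading coefficients of $RR_{X, q_X}$.

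For the second assertion, I would use the scaling identity $RR_{X, q_X}(t) = RR_{X, \lambda}(M_{\lambda, q_X} t)$ recalled just above the lemma: comparing coefficients, the leading coefficient $\beta_n$ of $RR_{X, \lambda}$ satisfies $\beta_n = \frac{a_n}{(2n)!}\, M_{\lambda, q_X}^{-n} = \frac{c_X}{(2n)!}\, M_{\lambda, q_X}^{-n}$. Given $c_X$ this pins down $M_{\lambda, q_X}^{\,n} = \frac{c_X}{(2n)!\, \beta_n}$, and since $M_{\lambda, q_X}$ is a positive real number (it is the positive topological multiple relating $\lambda$ and $q_X$), its $n$-th root is unique, so $M_{\lambda, q_X}$ is determined.

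The one point deserving care --- and essentially the only non-bookkeeping step --- is the observation that ``knowing $RR_{X, q_X}$ (resp.\ $RR_{X, \lambda}$)'' includes knowing its degree $n$, so that the extraction $a_i = (2i)!\, b_i$ and the taking of a positive $n$-th root at the end are both unambiguous; this follows at once from positivity of the leading coefficient. The remaining ingredients --- $\mathrm{Td}_2 = \tfrac1{12} c_2$ for $c_1 = 0$, linearity of $C(-)$, and the scaling relation between the two normalizations --- are already in hand.
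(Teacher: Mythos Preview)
Your argument is correct and matches the paper's proof essentially line for line: both extract $c_X=(2n)!b_n$ and $C(c_2(X))=12(2n-2)!b_{n-1}$ from the two leading coefficients (via $\mathrm{Td}_2=\tfrac{1}{12}c_2$) to obtain $M_{\lambda,q_X}=2n\,b_n/b_{n-1}$, and both solve $\beta_n M_{\lambda,q_X}^n=c_X/(2n)!$ for the unique positive root in the second part. Your additional remark that $n$ itself is recovered from the degree of the polynomial is a small clarification the paper leaves implicit.
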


\begin{proof}
Write $\text{\rm RR}_{X, q_X}(t)=at^n+bt^{n-1}+(\text{\rm lower terms}).$ 
Then we have
$$
\int_X \td(X)\exp(L)=\text{\rm RR}_{X, q_X}(q_X(L)).
$$
Comparing both sides, 
this implies that 
$$
\int_X \frac{1}{(2n)!}L^{2n}=aq_X(L)^n,
$$
and 
$$
\int_X\td_2(X) \frac{1}{(2n-2)!}L^{2n-2}=bq_X(L)^{n-1}.
$$
Using Fujiki's relation, these implies that 
$$
c_X=(2n)!a
$$
and 
$$
C(c_2(X))=12(2n-2)!b.
$$
Hence $M_{\lambda, q_X}=\frac{12c_X}{(2n-1)C(c_2(X))}=\frac{2na}{b}$.

Write $\text{\rm RR}_{X, \lambda}(t)=a't^n+(\text{\rm lower terms}).$ 
Then
$\text{\rm RR}_{X, q_X}(t)=\text{\rm RR}_{X, \lambda}(M_{\lambda, q_X}t)=a'M_{\lambda, q_X}^nt^n+(\text{\rm lower terms})$. Then $(2n)!a'M_{\lambda, q_X}^n=c_X$, which determines $M_{\lambda, q_X}$ as $M_{\lambda, q_X}$ is a positive real number. 
\end{proof}

\subsection{Chern numbers of OG10}

\begin{thm}
Let $X$ be a HK of OG10-type. Then the Chern numbers of $X$ are the following:
\begin{align*}
&(c_2^5, c_2^3c_4, c_2^2c_6, c_2c_8, c_2c_4^2, c_4c_6, c_{10})\\
=&(127370880, 53071200, 12383280, 1791720, 22113000, 5159700, 176904)
\end{align*}
\end{thm}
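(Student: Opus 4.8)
The plan is to set up a linear system in the unknown Chern numbers of $X$, using all available universal constraints that a HK tenfold satisfies, together with the newly-determined Riemann--Roch polynomial and the known Hodge numbers of OG10. First I would enumerate the Chern monomials of degree $10$ (top degree) on a $10$-dimensional variety. Since the odd Chern classes of a HK vanish, the relevant monomials are built from $c_2, c_4, c_6, c_8, c_{10}$: these are $c_2^5$, $c_2^3 c_4$, $c_2^2 c_6$, $c_2 c_8$, $c_2 c_4^2$, $c_4 c_6$, and $c_{10}$, exactly the seven numbers in the statement. So one needs seven independent linear relations among these seven integrals.

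The sources of relations I would assemble are: (i) $\int_X c_{10} = e(X)$, the topological Euler characteristic, which is known by Mozgovyy (and re-derivable from the Hodge numbers computed by de Cataldo--Rapagnetta--Saccà); (ii) the Riemann--Roch polynomial $\mathrm{RR}_X(t)$, now known to be of $K3^{[5]}$-type by Proposition~\ref{prop:OG10}, which via \eqref{eq:chi} and Theorem~\ref{thm:constants} pins down each $C(\mathrm{Td}_{2n-2i}(X))$ once we also know $c_X = 945$ — these give the integrals $\int_X \mathrm{Td}_{2k}(X)\smile \beta^{10-2k}$ as explicit multiples of $q_X(\beta)^{5-k}$, hence (expanding the Todd class in Chern classes and using the polarized Fujiki relation to convert $q_X(\beta)^{5-k}$-type expressions) linear relations among the Chern numbers; (iii) further Hirzebruch--Riemann--Roch evaluations and the Chern-number relations coming from $\chi(X,\O_X)=6$, $\chi$ of the other deformation invariants, and especially the constraints that the $\hat{A}$-genus / signature-type characteristic numbers are controlled — concretely one can use that $\int_X \mathrm{Td}(X) = 6$, together with the vanishing of $\int_X \mathrm{ch}(T_X)\cdot(\text{stuff})$ forced by the holomorphic symplectic form giving $T_X \cong \Omega_X^1$, which already yields relations like $c_1 = 0$ and constrains $\mathrm{ch}_{2k}$; and (iv) the Hodge numbers themselves, which via HRR for $\Omega_X^p$ give the holomorphic Euler characteristics $\chi(X,\Omega_X^p)$, each of which is a Chern-number integral (these are the $\chi_p$-genera, linear in the Chern numbers). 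Collecting (i)--(iv), I would write down the resulting linear system, check its rank is $7$ over $\Q$, and solve.

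The main obstacle will be bookkeeping and independence: one must be careful that the relations from the Riemann--Roch polynomial (which only sees the \emph{symmetric} part coupling Todd classes to powers of a single class $\beta \in H^2$) and those from the $\chi_p$-genera are genuinely independent and together of full rank $7$ — a priori the Hodge numbers alone are known not to suffice (as the authors note, citing \cite[Section 5.2]{SawonPhd}), so the new input from $\mathrm{RR}_X(t)$ must supply the missing equation(s), and verifying exactly how many new independent constraints it contributes is the delicate point. A secondary technical step is expressing $\mathrm{Td}_{2k}(X)$ and $\mathrm{ch}_p(X)$ as explicit polynomials in $c_2,\dots,c_{10}$ (Newton's identities, using $c_{\text{odd}}=0$) and then, for the $\beta$-coupled integrals, invoking the polarized Fujiki relation \eqref{eq:fujiki} to reduce $\int_X c_{2k}(X)\smile\beta^{10-2k}$ to the constant $C(c_{2k}(X))$ times $q_X(\beta)^{5-k}$; matching coefficients with the known $\mathrm{RR}_X$ then isolates each $C(c_{2k}(X))$ and, expanding products of lower Chern classes similarly, yields the linear relations. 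Once the system is assembled I expect the solution to be forced and to produce the stated integer tuple; a final sanity check is that all seven values are positive integers and consistent with, e.g., the Euler characteristic $\int_X c_{10} = 176904$.
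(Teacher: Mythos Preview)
Your overall architecture is right, and sources (i), (iii), (iv) do produce genuine linear equations in the seven Chern numbers. The gap is in source (ii). When you say that the Riemann--Roch polynomial, via \eqref{eq:chi} and Theorem~\ref{thm:constants}, determines the integrals $\int_X \Td_{2k}(X)\smile\beta^{10-2k}$ and that ``expanding the Todd class in Chern classes\ldots\ yields linear relations among the Chern numbers'', you are conflating two different kinds of invariants. What the naive Hirzebruch--Riemann--Roch plus Fujiki argument produces is the list of generalized Fujiki constants $C(\Td_{2k})$, and after expanding $\Td_{2k}$ these become linear combinations of the constants $C(c_{2i_1}\cdots c_{2i_r})$ for Chern monomials of cohomological degree $4k$. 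Those are \emph{not} Chern numbers: a Chern number is a top-degree integral $\int_X c_{2i_1}\cdots c_{2i_r}$ with $\sum 2i_j = 10$, whereas $C(\alpha)$ for a degree-$4k$ class $\alpha$ records $\int_X \alpha\smile\beta^{10-2k}$ against a variable $\beta\in H^2$. There is no elementary way to pass from the $C(\alpha)$'s to the Chern numbers; in particular $C(c_2^2)$ has nothing to do with $\int_X c_2^5$. The only genuine Chern-number relation you recover this way is the $k=5$ term $\int_X \Td_{10}=\chi(\O_X)=6$, which you already listed under (iii).

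The paper closes exactly this gap by invoking Nieper's Riemann--Roch formula (\cite[Theorem 5.2]{nieper}), which rewrites $\chi(X,L)$ as $\int_X\exp\!\big(-2\sum_{k\ge1}b_{2k}s_{2k}(X)T_{2k}(\sqrt{\lambda(L)/4+1})\big)$ with $s_{2k}=(2k)!\ch_{2k}(X)$. Because the exponential is expanded and then integrated over $X$, only degree-$10$ products $s_{2k_1}\cdots s_{2k_r}$ survive, and these \emph{are} Chern numbers. Matching this against the known polynomial $\binom{2\lambda+6}{5}$ (after converting $q_X$ to $\lambda$) then yields honest linear equations in the seven Chern numbers; the paper finds three independent ones, which together with the four $\chi^p$-equations from the Hodge numbers of \cite{CRS} give a rank-$7$ system. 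So the missing idea in your plan is precisely Nieper's formula (or an equivalent device) to turn the Riemann--Roch polynomial into Chern-number constraints; without it your system will be underdetermined, consistent with the remark that Hodge numbers alone do not suffice.
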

\begin{proof}
Let $X$ be a HK of OG10-type.
Then the Riemann--Roch polynomial is
$$
\text{\rm RR}_{X, q_X}(t)=\binom{t/2+6}{5}.
$$
By the discussion of the first section, 
we know that 
\begin{equation}
\text{\rm RR}_{X, \lambda}(t)=\text{\rm RR}_{X, q_X}(4t)=\binom{2t+6}{5}.
\label{ortiz RR} \end{equation}
That is, for a line bundle $L$ on $X$, 
$$
\chi(X,L)=\binom{2\lambda(L)+6}{5}.
$$
On the other hand, Nieper \cite[Theorem 5.2]{nieper} proved that
\begin{align}\chi(X,L){}&=\int_{X}\exp\bigg(-2\sum_{k=1}^{\infty}\frac{B_{2k}}{4k}\ch_{2k}(X)T_{2k}\bigg(\sqrt{\frac{\lambda(L)}{4}+1}\bigg)\bigg)\notag\\
{}&=\int_{X}\exp\bigg(-2\sum_{k=1}^{\infty}b_{2k}s_{2k}(X)T_{2k}\bigg(\sqrt{\frac{\lambda(L)}{4}+1}\bigg)\bigg)
\label{nieper RR} \end{align}
where $B_{2k}$ are the Bernoulli numbers, 
$b_{2k}=\frac{B_{2k}}{4k(2k)!}$ are modified Bernoulli numbers with $b_0=1$, $s_{2k}=(2k)!\ch_{2k}(X)$, and $T_{2k}$ are even Chebyshev polynomials of the first kind.
So by \eqref{ortiz RR} and \eqref{nieper RR}, we get an identity between polynomials in terms of $y=\sqrt{{\lambda(L)}/{4}+1}$:
\begin{equation}
\binom{8y^2-2}{5}=\int_{X}\exp\bigg(-2\sum_{k=1}^{\infty}b_{2k}s_{2k}(X)T_{2k}(y)\bigg).\label{ortizRR=nieperRR}
\end{equation}
Comparing coefficients of \eqref{ortizRR=nieperRR}, we have $6$ linear equations of $7$ Chern numbers $c_2^5$, $c_2^3c_4$, $c_2^2c_6$, $c_2c_8$, $c_2c_4^2$, $c_4c_6$, $c_{10}$.
Of course this is not sufficient to get a unique solution. In fact, among $6$ equations, there are only $3$ linearly independent linear equations (from comparing coefficients of $y^8$, $y^6$, and $y^2$).

On the other hand, we have more equations form the Hodge numbers of $X$ computed by de Cataldo--Rapagnetta--Sacc\`a \cite{CRS}.
This gives us 
\begin{align*}
\chi^{1}(X){}&=-111;\\
\chi^{2}(X){}&=1062;\\
\chi^{3}(X){}&=-7151;\\
\chi^{4}(X){}&=33534.
\end{align*}
Here $\chi^p(X)=\sum_{q=0}^{\dim X}(-1)^{q}h^{p,q}(X)=\int_X\ch(\Omega_X^p)\td(X)$.
Again, expressing the left-hand sides by Chern numbers, we get $4$  linear equations  of $7$ Chern numbers. 
We can use Mathematica to solve the linear equations for 
$7$ Chern numbers of OG10 as the following:
\begin{align*}
&(c_2^5, c_2^3c_4, c_2^2c_6, c_2c_8, c_2c_4^2, c_4c_6, c_{10})\\
=&(127370880, 53071200, 12383280, 1791720, 22113000, 5159700, 176904).
\end{align*}
\end{proof}

\end{document}